\newcommand{\R}{I\!\!R}
\def\bgneqy{\begin{eqnarray}}
\def\endeqy{\end{eqnarray}}
\def\bgneqy*{\begin{eqnarray*}}
\def\endeqy*{\end{eqnarray*}}
\newtheorem{thm}{Theorem}[section]
\newtheorem{Lem}[thm]{\normalsize{\bf{Lemma}}}
\newtheorem{Rem}[thm]{Remark}
\begin{document}
\title{Asymptotic expansions of the largest eigenvalues}

\author{Maroua Gozzi\thanks{Department of Mathematics, Faculty of Sciences, 7021 Zarzouna, Bizerte, Tunisia.
(Email:gozzi.maroua@yahoo.fr)} \and \ Abdessatar Khelifi \thanks{ Faculty of
Sciences of Bizerte, University of Carthage
(Email: abdessatar.khelifi@fsb.rnu.tn) }}

\date{}
\maketitle
\begin{abstract}
In this paper, we provide a rigorous derivation of asymptotic formula
for the largest eigenvalues using the convergence estimation of the eigenvalues
of a sequence of self-adjoint compact operators of perturbations resulting
from the presence of small inhomogeneities.
\end{abstract}
%
%

\noindent {\footnotesize Keywords: Largest eigenvalues, asymptotic expansion, convergence estimation,
 small inhomogeneities.}
 \bigskip

\noindent {\footnotesize Mathematics Subject Classification
(MSC2010): 35R30, 35B30}
 \section{Introduction}
 In the past decades, the Laplacian spectrum has received much more and more attention, since it has been applied to several fields, such a  randomized algorithms, combinational optimization problem and machine learning. One of this attention, the so-called largest eigenvalue $\cite{Ern-Guermond}$, $\cite{Gaussier-Yvon}$ plays an important role in many techniques of multivariate statistics, including the principal Component Analysis, and possibility of their using in statical tests as test statistics $\cite{Tracy-C-Widom-H}$. For example, the study of sample covariance matrices in fundamental multivariate analysis. In this setting, relatively little is known about the distribution of the largest eigenvalue $\cite{Johnstone-I-M}$, $\cite{Bilodeau}$, $\cite{Soshnikov}$.
 The present paper give an asymptotic expansion for the largest eigenvalues using the convergence estimation of the eigenvalues
of a sequence of self-adjoint compact operators in perforated domain which are deeply based on the $\textsf{Osborn}'s $ formula in \cite{Osborn}.\\
 Let $\Omega$ be a bounded domain in ${\R}^{2}$, with lipchitz boundary $\partial{\Omega}$.
Let $\nu$ denote the out unit normal to $\partial\Omega$ and assume it has a smooth background conductivity 1. We suppose that $\Omega$ contains a finite number of small inhomogeneities each of the form $z_{l}+\epsilon B_{l}$, where $B_{l}\subset {\R}^{2}$ is a bounded smooth ($C^{\infty}$) domain containing the origin. The total collection of imperfections thus takes the form $D_{\epsilon}=\bigcup_{l=1}^N D_{\epsilon}^{l}$, where $ D_{\epsilon}^{l}=z_{l}+\epsilon B_{l}$. The points $z_{l}\in\Omega$, $l=1, 2, \ldots, N$, that determines the locations of the inhomogeneities.
Let $\lambda^{i}$ be the ith eigenvalue of multiplicity $\textit{m}$ for the Laplacian
in the absence of any inhomogeneities. Then there exist $\textit{m}$ nonzero solutions
$\{u^{ij}\}_{j=1}^m$ to
\begin{equation}
\left\{ \begin{array}{ll}
-\Delta u^{ij}=\lambda^{i} u^{ij}& \textrm{in $\Omega,$}\\
\\
\frac{\partial u^{ij}}{\partial\nu}|_{\partial \Omega}=0,\quad \int_{\Omega} |u^{ij}|^{2}=1.
\end{array} \right.
\end{equation}

The eigenvalues problem in the presence of imperfections consists of finding $\{ \lambda_{\epsilon}^{ij}\}_{j=1}^m$ such that there exists a nontrivial eigenfunctions $\{ u_{\epsilon}^{ij}\}_{j=1}^m$ that is solution to

\begin{equation}
\left\{ \begin{array}{ll}
-\nabla. (1+ \sum_{l=1}^{N}((k_{l}-1){\chi}(D_{\epsilon}^{l})))\nabla u_{\epsilon}^{ij}=\lambda_{\epsilon}^{ij}u_{\epsilon}^{ij}& \textrm{in $\Omega,$}\\
\\
\frac{\partial u^{ij}_{\epsilon}}{\partial\nu}|_{\partial \Omega}=0,\quad \int_{\Omega} |u_{\epsilon}^{ij}|^{2}=1.
\end{array} \right.
\end{equation}
\\
It is well known that all eigenvalues of (1) are real, of finite multiplicity, have no finite accumulation points and there corresponding eigenfunctions which make up an orthonormal basis of $ L^{2}(\Omega)$.\\
This paper is organized as follows. In section 2, we give some preliminaries results. In section 3, we derive the asymptotic expansion for the Largest eigenvalue using the convergence estimate of the eigenvalues of a sequence of self-adjoint compact operators and applying the $\textsf{Osborn}'s $ formula.\\
\section{Some preliminaries results}
To derive the asymptotic formula for the eigenvalues we will use a convergence estimate of the eigenvalues of a sequence of self-adjoint compact operators. Let X be a (real) Hilbert space and suppose we have a compact, self-adjoint linear operator $T :X\rightarrow X$ along with a sequence of compact, self-adjoint linear operators $T_{\epsilon}:X\rightarrow X $ such that $T_{\epsilon}\rightarrow T$ pointwise as $\epsilon\rightarrow 0$ and the sequence $\{ T_{\epsilon}\}$ is collectively compact.
Let $\mu$ be a nonzero eigenvalue of $T$ of multiplicity $\textit{m}$.
Then we know that for small $\epsilon$, each $T_{\epsilon}$ has a set of eigenvalues counted according to multiplicity, $\{ \mu_{\epsilon}^{1}, \ldots,\mu_{\epsilon}^{m}\}$ such that for each $j$, $\mu_{\epsilon}^{j}\rightarrow \mu$ as $\epsilon \rightarrow 0$.
Define the average
\begin{equation}
{\overline{\mu}_{\epsilon}}=\frac{1}{m}\sum_{j=1}^{m}\frac{1}{\mu_{\epsilon}^{j}}.
\end{equation}
 If $ \phi^{1}, \ldots, \phi^{m}$ is an orthonormal basis of eigenfunctions associated with the eigenvalue $\mu$, then there exists a constant C such that for $j= 1, \ldots, m$ the following ${{\textsf{Osborn}'s}}$ formula holds

\begin{equation}
|\mu-{\bar{\mu}_{\epsilon}}-\frac{1}{m}\sum_{j=1}^{m}<(T-T_{\epsilon})\phi^{j}, \phi^{j}>|\leq C\|(T-T_{\epsilon})|_{span\{\phi^{j}\}_{1\leq j \leq m}}\|^{2},
\end{equation}
where $(T-T_{\epsilon})|_{{span{\{\phi^{j}\}}}_{1\leq j \leq m}}$ denotes the restriction of $(T-T_{\epsilon})$ to the $\textit{m}$-dimensional vector space spanned by ${\phi^{j}}_{1\leq j \leq m}$.\\
\\
In our case, let X be $ L^{2}(\Omega)$ with the standard inner product. For any $g \in L^{2}(\Omega)$, we define $T_{\epsilon}g=u_{\epsilon}$ and $Tg = u$, where $u_{\epsilon}$ is the solution to

\begin{equation}
\left\{ \begin{array}{ll}
-\nabla. (1+ \sum_{l=1}^{N}((k_{l}-1)\chi(D_{\epsilon}^{l}))\nabla u_{\epsilon}= g & \textrm{in $\Omega$},\\
\\
\frac{\partial u_{\epsilon}}{\partial\nu}|_{\partial \Omega}=0, \quad \int_{\Omega} u_{\epsilon}=0,\\
\end{array} \right.
\end{equation}

and u is the solution of

\begin{equation}
\left\{ \begin{array}{ll}
\Delta u= g & \textrm{in $\Omega,$}\\
\\
{\frac{\partial u}{\partial\nu}}|_{\partial \Omega}=0, \quad \int_{\Omega}u=0.\\
\end{array} \right.
\end{equation}
The function $g\mapsto(-\Delta)^{-1}g$ is continuous from $L^{2}(\Omega)$ to $H^{1}_{0}(\Omega)$. Clearly $T_{\epsilon}$ and T are compact operators   from $L^{2}(\Omega)$ to $L^{2}(\Omega)$. From the standard $H^{1}$ estimates we get the following lemma:
\begin{Lem}
$T_{\epsilon}$ and T are compact, self-adjoint operators from $L^{2}(\Omega)$ to $L^{2}(\Omega)$.
Moreover, the family of operators $\{T_{\epsilon}\}$ is collectively compact.
\end{Lem}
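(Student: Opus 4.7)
The plan is to handle the three assertions in sequence: self-adjointness via Green's identity, individual compactness via elliptic regularity combined with the Rellich--Kondrachov embedding, and collective compactness via an $H^{1}$ a priori estimate that is \emph{uniform} in $\epsilon$.

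For self-adjointness, given $g,h\in L^{2}(\Omega)$ I set $u=Tg$ and $v=Th$ and perform two integrations by parts, the homogeneous Neumann condition killing every boundary contribution; this yields $\langle Tg,h\rangle=\int_{\Omega}u\,\Delta v=\int_{\Omega}(\Delta u)\,v=\langle g,Th\rangle$. The same argument applies verbatim to $T_{\epsilon}$, since the real scalar coefficient $A_{\epsilon}:=1+\sum_{l=1}^{N}(k_{l}-1)\chi(D_{\epsilon}^{l})$ makes the associated bilinear form $\int_{\Omega}A_{\epsilon}\nabla u\cdot\nabla v$ symmetric and gives $\langle T_{\epsilon}g,h\rangle=\langle g,T_{\epsilon}h\rangle$. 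Individual compactness then comes for free: the continuity $g\mapsto(-\Delta)^{-1}g\in H^{1}(\Omega)$ recorded just above the lemma, composed with the compact Rellich--Kondrachov injection $H^{1}(\Omega)\hookrightarrow L^{2}(\Omega)$, makes $T$ compact, and the analogous (for now $\epsilon$-dependent) $H^{1}$ bound for $u_{\epsilon}$ does the same for each $T_{\epsilon}$.

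The only delicate point---and the place I expect the main obstacle to lie---is collective compactness, which demands that the $H^{1}$ control be uniform in $\epsilon$. The plan is to test the weak form of (5) against $u_{\epsilon}$ itself and exploit the uniform lower bound $A_{\epsilon}\geq\alpha_{0}:=\min(1,k_{1},\ldots,k_{N})>0$, which is independent of $\epsilon$ because the $k_{l}$ are fixed positive constants and $A_{\epsilon}$ can never drop below them on any measurable subset regardless of how the inclusions $D_{\epsilon}^{l}$ shrink. Cauchy--Schwarz combined with the Poincaré inequality on the zero-mean subspace of $H^{1}(\Omega)$ then delivers $\|u_{\epsilon}\|_{H^{1}(\Omega)}\leq C\,\|g\|_{L^{2}(\Omega)}$ with $C$ independent of $\epsilon$. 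Consequently $\{T_{\epsilon}g:\|g\|_{L^{2}(\Omega)}\leq 1,\ \epsilon>0\}$ is bounded in $H^{1}(\Omega)$, and Rellich--Kondrachov makes it precompact in $L^{2}(\Omega)$, which is precisely collective compactness of $\{T_{\epsilon}\}$. The strategy would break down if any $k_{l}$ were allowed to degenerate to $0$; that the $k_{l}$ are bounded away from zero is therefore the hypothesis on which the whole argument rests.
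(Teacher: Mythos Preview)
Your proposal is correct and follows precisely the route the paper gestures at: the paper offers no detailed proof of this lemma, merely the remark that ``from the standard $H^{1}$ estimates we get the following lemma,'' and your argument---symmetry of the bilinear form for self-adjointness, the uniform ellipticity bound $A_{\epsilon}\geq\min(1,k_{1},\ldots,k_{N})>0$ tested against $u_{\epsilon}$ for an $\epsilon$-independent $H^{1}$ estimate, then Rellich--Kondrachov---is exactly what that phrase encodes. You have supplied the details the paper omits, with the same underlying mechanism.
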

Let $(\mu^{i},u^{i})$, and $(\mu_{\epsilon}^{i}, u_{\epsilon}^{i})$ be the ith normalised eigenpairs of T and $T_{\epsilon}$  respectively.
Then if $\lambda^{i}=\frac{1}{\mu^{i}} $ and $\lambda^{i}_{\epsilon}=\frac{1}{\mu^{i}_{\epsilon}} $, then $u_{\epsilon}^{i}$ and $u^{i}$ are the solution of (2) and (1) respectively. From the spectral theory, if $\lambda^{i}$ has a multiplicity $\textit{m}$ with a correspond set of orthonormal eigenfunctions $\{u^{ij}\}_{j=1}^{m}$ then there exist $\textit{m}$ eigenvalues $\lambda_{\epsilon}^{ij}$ that satisfies the following lemma.

\begin{Lem}
Let $\Omega$ be a bounded domain in ${\R}^{2}$, and $\epsilon > 0$ a small number. Then the eigenvalues of Laplacian-Neumann operator satisfy the following expansion
\begin{equation}
\lambda^{i}- \lambda^{ij}_{\epsilon}= \theta(1),\quad \textrm{as tends to } 0,
\end{equation}
in the other word,
\begin{equation}
\lambda^{i}\lambda_{\epsilon}^{ij}= \lambda^{i^{2}}+\lambda^{i}\theta(1),\quad \textrm{as tends to } 0,
\end{equation}
for any $j = 1, 2, \ldots, \textit{m}$, such that $\theta(1)$ independent of $i$.
\end{Lem}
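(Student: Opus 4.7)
The plan is to combine the operator-theoretic framework set up in Section~2 with classical results on spectral perturbation of collectively compact operators. More precisely, I will first establish that $T_\epsilon \to T$ pointwise on $L^2(\Omega)$; together with the collective compactness from Lemma~2.1, this will force $\mu_\epsilon^{ij} \to \mu^i$ as $\epsilon \to 0$, after which the correspondence $\lambda = 1/\mu$ will produce the desired expansion.

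For the pointwise convergence, fix $g \in L^2(\Omega)$ and set $w_\epsilon := T_\epsilon g - T g = u_\epsilon - u$. Subtracting the weak forms of (5) and (6) gives
\[ \int_\Omega \nabla w_\epsilon \cdot \nabla v\, dx = -\sum_{l=1}^{N}(k_l-1)\int_{D_\epsilon^l}\nabla u_\epsilon \cdot \nabla v\, dx \]
for every admissible test function $v$. Testing against $v = w_\epsilon$, using the uniform $H^1$ a priori bound $\|u_\epsilon\|_{H^1(\Omega)} \le C\|g\|_{L^2(\Omega)}$ together with Cauchy--Schwarz and the elementary measure estimate $|D_\epsilon^l| = O(\epsilon^2)$, then yields $\|w_\epsilon\|_{H^1} \to 0$. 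The compact embedding $H^1(\Omega) \hookrightarrow L^2(\Omega)$ converts this into pointwise convergence $T_\epsilon g \to Tg$ in $L^2(\Omega)$. With pointwise convergence and collective compactness in hand, the Anselone-type spectral perturbation theorem that underlies Osborn's formula (4) provides the required eigenvalue convergence: for each nonzero $\mu^i$ of multiplicity $m$, the perturbed operator $T_\epsilon$ carries $m$ eigenvalues $\mu_\epsilon^{ij}$ tending to $\mu^i$. Since $\mu^i \neq 0$, reciprocating yields $\lambda_\epsilon^{ij} \to \lambda^i$, i.e.\ (7), and multiplication by $\lambda^i$ gives (8).

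The main delicate point I anticipate is the claim that the error term $\theta(1)$ is independent of $i$. The Anselone framework yields a rate of convergence controlled by $\|(T-T_\epsilon)|_{\mathrm{span}\{\phi^{ij}\}_j}\|$, which a priori depends on the unperturbed eigenfunctions at level $i$ and can therefore deteriorate as $i$ grows. Uniformity in $i$ is not automatic: it would require either controlling the $H^1$ norms of the eigenfunctions across the spectrum or restricting attention to a fixed finite window of indices. This is where the most careful bookkeeping will be needed.
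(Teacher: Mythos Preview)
Your argument is essentially correct for a \emph{fixed} index $i$, but it takes a genuinely different route from the paper. The paper does not appeal to the abstract Anselone/collective-compactness machinery for this lemma at all; instead it quotes the full layer-potential asymptotic expansion of Ammari--Kang--Lim (the $B_{n,p}(\omega)$ series) and extracts from it the Ozawa-type leading term
\[
\lambda^{i}-\lambda^{ij}_{\epsilon}
   =\frac{-2\pi}{\ln(\epsilon\sqrt{\lambda^{i}})}\,|u^{ij}(z)|^{2}
    +\theta\!\Bigl(\frac{1}{|\ln\epsilon|}\Bigr),
\]
and then lets $\epsilon\to 0$. The point of doing it this way is precisely the issue you yourself flag: the explicit formula makes the $i$-dependence visible, since the rate is $O(1/|\ln\epsilon|)$ multiplied by $|u^{ij}(z)|^{2}$, a quantity that the paper later bounds uniformly in $i$ via Theorem~3.1. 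Your operator-theoretic route is cleaner and more self-contained for the bare convergence $\lambda^{ij}_\epsilon\to\lambda^{i}$, but it gives no handle on the rate and hence cannot, by itself, justify the clause ``$\theta(1)$ independent of $i$''. So the paper's approach buys exactly the uniformity that yours does not.

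One small technical wrinkle in your energy estimate: from the subtraction you wrote, the right-hand side carries $\nabla u_\epsilon$, and the bound $\|\nabla u_\epsilon\|_{L^{2}(D_\epsilon)}\to 0$ does \emph{not} follow from the uniform $H^{1}$ bound together with $|D_\epsilon|=O(\epsilon^{2})$ alone (that would require an $L^\infty$ gradient bound you do not have). The clean version is to subtract so that the $a_\epsilon$-weighted Dirichlet form sits on the left, giving
\[
\int_\Omega a_\epsilon\,|\nabla w_\epsilon|^{2}
   = -\sum_{l}(k_l-1)\int_{D_\epsilon^{l}}\nabla u\cdot\nabla w_\epsilon,
\]
whence $\|\nabla w_\epsilon\|_{L^{2}(\Omega)}\le C\,\|\nabla u\|_{L^{2}(D_\epsilon)}\to 0$ by absolute continuity, since $u$ is fixed. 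With that correction your pointwise convergence step is fine.
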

\begin{proof}
From $\cite{Ammari-Kang-Li}$, the proof is based in the following asymptotic expansion
\begin{eqnarray*}
\omega_{\epsilon}-\omega_{0} &=& \frac{1}{2 \sqrt{\pi}} \sum_{p=1}^{\infty} \frac{1}{p} \sum_{n=p}^{+\infty}\epsilon^{n} tr \int_{\partial V_{\delta_{0}}} B_{n,p}(\omega)d\omega \\
                                    \nonumber\\
&+&\frac{1}{2 \sqrt{\pi}}\sum_{n=1}^{\infty}\frac{(2\pi)^{n}}{n}\int_{\partial V_{\delta_{0}}}\left[\frac{1}{\ln(\eta\omega\epsilon)}
\times(D_{\Omega}^{\varepsilon})\left[N_{\Omega}^{\varepsilon}(.,z)\right](x)-\frac{\ln cap(\partial B)}{2 \pi}\right]^{n}d\omega,
\end{eqnarray*}
where
\begin{eqnarray*}
B_{n,p}(\omega)&=& (-1)^{p}\sum_{n_{i}}(A_{0}(\omega)+\ln(\omega\epsilon)B_{0}(\omega))^{-1}(A_{n_{1}}+\ln(\omega\epsilon)B_{0}(\omega))\\
                                                    \nonumber\\
&\times&\ldots(A_{0}(\omega)\ln(\omega\epsilon)B_{0}(\omega))^{-1}(A_{n_{1}}+\ln(\omega\epsilon)B_{n,p}(\omega))\omega^{n},                                                    \end{eqnarray*}
 then $\omega_{\epsilon}$ is the characteristic eigenvalue and $D_{\Omega}$ is the double layer potential. We obtain from this asymptotic expansion the following leading-order term of $\lambda^{i}- \lambda^{ij}_{\epsilon}$ in two dimensions as follows
$$\lambda^{i}- \lambda^{ij}_{\epsilon}=\frac{-2 \pi}{\ln(\epsilon \sqrt{\lambda^{i}})}|u^{ij}(z)|^{2}+\theta(\frac{1}{|\ln(\epsilon)|}),$$
this formula is exactly the one derived by $\textsf{Ozawa}$ in $\cite{Ozawa}$, see also $\textsf{Besson}$ $\cite{Besson}$, so as $\epsilon$ tends to 0 we get our desired result.
\end{proof}
\section{Asymptotic expansion of the largest eigenvalues}
In this part, we assume that the domains $D_{\epsilon}^{l}$, l = 1, 2, \ldots,$\textit{m}$, satisfy
\begin{equation}
0<d_{0}\leq|z_{l}-z_{k}| \quad \forall l\neq k, \quad dist(z_{l},\partial\Omega)\geq d_{0} \quad \forall l,
\end{equation}
which the following theorem holds
\begin{thm}
Suppose that $\Omega$ contains an inclusion as the form $D= z+\epsilon B$ which is far from the boundary. Then the solutions $\{u^{ij}\}_{j=1}^{m}$ to (1) satisfy
\begin{itemize}
              \item[1)] $\|u^{ij}\|_{L^{\infty}(D)} \leq C$, where C is independent of $\epsilon$ and $\lambda^{i}$.
              \item[2)] $\|\frac{\nabla u^{ij}}{\sqrt{\lambda^{i}}}\|_{L^{\infty}(D)}\leq C$, where C is independent of $\epsilon$ and $\lambda^{i}$.
             \item[3)] $\|\frac{\nabla^{2} u^{ij}}{\lambda^{i}}\|_{L^{\infty}(D)}\leq C$, where C is independent of $\epsilon$ and $\lambda^{i}$.

            \end{itemize}
\end{thm}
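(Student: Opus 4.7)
The plan is to use interior elliptic regularity for the Helmholtz-type equation $-\Delta u^{ij}=\lambda^i u^{ij}$ on a fixed compactly embedded subdomain of $\Omega$ that contains $D$ for all sufficiently small $\epsilon$. Since $\mathrm{dist}(z,\partial\Omega)\geq d_0>0$, for $\epsilon$ small enough we have $D\subset B_{d_0/2}(z)\Subset\Omega$, so all three bounds are purely interior statements on $u^{ij}$ that do not see the boundary $\partial\Omega$ or the small parameter $\epsilon$ beyond the fact that $D$ sits inside this fixed ball.

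For items 2) and 3) the natural scaling of the eigenvalue equation comes in. Setting $v(y):=u^{ij}(z+y/\sqrt{\lambda^i})$, one finds $\nabla_y v=(1/\sqrt{\lambda^i})(\nabla_x u^{ij})(z+y/\sqrt{\lambda^i})$ and $\Delta_y v=(1/\lambda^i)(\Delta_x u^{ij})(z+y/\sqrt{\lambda^i})=-v$, so $v$ solves $-\Delta v=v$ on the rescaled ball $B_{d_0\sqrt{\lambda^i}/2}(0)$, and the image of $D$ is $\tilde D=\epsilon\sqrt{\lambda^i}B$. The prefactors $1/\sqrt{\lambda^i}$ on $\nabla u^{ij}$ and $1/\lambda^i$ on $\nabla^2 u^{ij}$ exactly match the scalings stated in 2) and 3), so the whole theorem reduces to uniform $C^0$, $C^1$, and $C^2$ bounds for $v$ on $\tilde D$.

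For $-\Delta v=v$, standard interior $L^p$ and Schauder estimates on any pair of nested sets $K\Subset K'$ give $\|v\|_{C^2(K)}\leq C\|v\|_{L^\infty(K')}$ with $C$ depending only on the geometry of $K,K'$; the $L^\infty$ bound needed to start this chain would follow from Moser iteration or, equivalently in dimension two, from the Sobolev embedding $H^2\hookrightarrow L^\infty$ combined with elliptic regularity on the ball $B_{d_0/2}(z)$ and the normalization $\|u^{ij}\|_{L^2(\Omega)}=1$, after bootstrapping the equation one step.

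The main obstacle will be keeping the constants truly independent of $\lambda^i$: the rescaled outer ball $B_{d_0\sqrt{\lambda^i}/2}(0)$ blows up with $\lambda^i$, and the naive $L^2$ norm of $v$ on it is only controlled by $\sqrt{\lambda^i}$, not uniformly. The most economical fix is to appeal to a Bernstein-type inequality for Laplace eigenfunctions, $\|\nabla u^{ij}\|_{L^\infty(K)}\leq C\sqrt{\lambda^i}\,\|u^{ij}\|_{L^\infty(K')}$ and $\|\nabla^2 u^{ij}\|_{L^\infty(K)}\leq C\lambda^i\,\|u^{ij}\|_{L^\infty(K')}$ for $K\Subset K'\Subset\Omega$, which follows from heat-kernel or reproducing-kernel representations of eigenfunctions together with the very rescaling above. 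Once item 1) is in hand, these Bernstein bounds deliver items 2) and 3) immediately with $K'=B_{d_0/2}(z)$ and $K$ any fixed neighborhood of $D$, closing the argument.
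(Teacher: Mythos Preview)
Your reduction of 2) and 3) to 1) via the $\sqrt{\lambda^i}$-rescaling and Bernstein-type estimates is correct: once a uniform $L^\infty$ bound on $u^{ij}$ near $z$ is available, the stated control on $\nabla u^{ij}/\sqrt{\lambda^i}$ and $\nabla^2 u^{ij}/\lambda^i$ follows exactly as you describe.

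The genuine gap is item 1). Your proposed route---Moser iteration, or the chain $\|u^{ij}\|_{L^\infty}\le C\|u^{ij}\|_{H^2}\le C(1+\lambda^i)\|u^{ij}\|_{L^2}$---produces a constant that grows with $\lambda^i$. In two dimensions the sharp general sup-norm bound for $L^2$-normalized Laplace eigenfunctions is $\|u^{ij}\|_{L^\infty}\le C(\lambda^i)^{1/4}$, and it is attained already on the disk by the radial Neumann modes at $r=0$: in $c_iJ_0(\beta_{0i}r/R)$ the normalization forces $c_i\sim(\lambda^i)^{1/4}$ because $|J_0(\beta_{0i})|\sim\beta_{0i}^{-1/2}$. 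You correctly flag this as ``the main obstacle'', but the Bernstein inequality you then invoke only transfers a known $L^\infty$ bound on $u^{ij}$ to its derivatives; it does not supply the $\lambda^i$-free $L^\infty$ bound on $u^{ij}$ itself. No interior elliptic or rescaling argument starting solely from $\|u^{ij}\|_{L^2(\Omega)}=1$ can close this step, so as written your plan leaves 1) unproved.

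For comparison, the paper's argument is structurally different from yours: it first treats the disk through the explicit Bessel representation (Lemma~3.2) together with the asymptotics of $J_s$ and of the zeros $\beta_{si}$; for a general domain it writes $u^{ij}|_D$ as a single-layer potential $S_D^{\sqrt{\lambda^i}}\varphi$, expands this in powers of $\epsilon\sqrt{\lambda^i}$, and combines with Weyl's law (Lemma~3.3) to control the resulting series. Neither ingredient is the interior-regularity/Bernstein machinery you propose.
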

%
%
%
To prove the Theorem 3.1, we need this following lemma
\begin{Lem}
For any $i,s = 0, 1, 2, \ldots$, we have
\begin{equation}
u_{s}^{ij}(r, \phi) = \frac{J_{s}(\beta_{si}\frac{r}{R})}{R\sqrt{\pi\{1-\frac{s^{2}}{\beta_{si}^{2}}\}J_{s}^{2}(\beta_{si})}}e^{\pm Is\phi},
\end{equation}
where $I^{2}= -1$, $J_{s}(r)$ is the Bessel function of integer order s, $\beta_{si}$ denotes the ith zero of $J'_{s}(r)$ and $\lambda_{s}^{i}=(\frac{\beta_{si}}{R})^{2}$ is the associated eigenvalue.
\end{Lem}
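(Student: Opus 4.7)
The plan is to derive the claimed formula by separation of variables on a disk of radius $R$, which is the relevant local model because the inclusion lies far from $\partial\Omega$. Working in polar coordinates $(r,\phi)$, the eigenvalue equation reads
$$-\frac{1}{r}\frac{\partial}{\partial r}\Bigl(r\frac{\partial u}{\partial r}\Bigr) - \frac{1}{r^{2}}\frac{\partial^{2} u}{\partial \phi^{2}} = \lambda u,$$
and the ansatz $u(r,\phi) = f(r)\,g(\phi)$ decouples this into an angular equation $g'' + s^{2} g = 0$ and a radial equation of Bessel type.

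First, $2\pi$-periodicity of $g$ forces $s \in \Z$ and $g(\phi) = e^{\pm I s \phi}$. The radial equation becomes $r^{2} f'' + r f' + (\lambda r^{2} - s^{2}) f = 0$; requiring boundedness at $r=0$ selects $f(r) = c\, J_{s}(\sqrt{\lambda}\, r)$. Imposing the Neumann condition $\partial_{\nu}u|_{r=R} = 0$ reduces to $J'_{s}(\sqrt{\lambda}\, R) = 0$, so $\sqrt{\lambda}\,R = \beta_{si}$, the $i$th positive zero of $J'_{s}$. This immediately yields $\lambda_{s}^{i} = (\beta_{si}/R)^{2}$ and the claimed functional form of $u_{s}^{ij}$ up to a constant $c$.

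Next, I would fix $c$ from $\int_{\Omega} |u_{s}^{ij}|^{2} = 1$. In polar coordinates this reads
$$|c|^{2}\, 2\pi \int_{0}^{R} J_{s}^{2}\!\Bigl(\beta_{si}\frac{r}{R}\Bigr)\, r\, dr = 1,$$
and the classical identity
$$\int_{0}^{R} J_{s}^{2}\!\Bigl(\beta_{si}\frac{r}{R}\Bigr)\, r\, dr = \frac{R^{2}}{2}\Bigl(1 - \frac{s^{2}}{\beta_{si}^{2}}\Bigr) J_{s}^{2}(\beta_{si})$$
then gives $|c| = \bigl( R \sqrt{\pi(1 - s^{2}/\beta_{si}^{2})}\,|J_{s}(\beta_{si})|\bigr)^{-1}$, matching the stated prefactor.

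The only nonroutine step is the Bessel integral identity above; it is obtained by multiplying Bessel's equation by $r J_{s}(\beta_{si}\, r/R)$, integrating by parts twice on $[0,R]$, and using $J'_{s}(\beta_{si}) = 0$ to kill the boundary contribution (this is the standard Lommel computation). Everything else is bookkeeping, so I expect no genuine obstacle beyond carefully tracking the square of the normalizing constant through this Bessel identity.
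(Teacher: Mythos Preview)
Your proposal is correct and follows essentially the same route as the paper: separation of variables in polar coordinates, reduction to Bessel's equation for the radial part, and the Neumann condition $J'_{s}(\sqrt{\lambda}\,R)=0$ to identify $\lambda_{s}^{i}=(\beta_{si}/R)^{2}$. Your treatment is in fact more complete than the paper's, since you explicitly derive the normalization constant via the Lommel integral identity, whereas the paper simply writes down the normalized eigenfunction at the end without computing the constant.
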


\begin{proof}
For a round disk of radius R, the Helmholtz equation $(\Delta+ \lambda_{s}^{i})u^{ij}=0$ can be expressed in the polar coordinate system in the following form (we can see $\cite{Mezhlum-Antonio}$)
$$\frac{\partial^{2}u^{ij}}{\partial r^{2}}+ \frac{1}{r} \frac{\partial u^{ij}}{\partial }+\frac{1}{r^{2}}\frac{\partial^{2}u^{ij}}{\partial \theta^{2}}+\lambda_{s}^{i}u^{ij}=0,$$
with $0\leq r\leq R$, $0\leq \theta\leq 2\pi.$\\
\\
 One can seek a solution of the last equation as Fourrier expansion over $0\leq \theta\leq 2\pi$, that
$$u^{ij}=\sum_{-\infty}^{\infty}U_{si}(r)e^{is\theta}.$$
Then, for each $U_{si}(r)$, due to linear independence of trigonometric functions, we arrives at to ordinary differential equation
$$U''_{si}(r)+\frac{1}{r}U'_{si}(r)+(\lambda_{s}^{i}-\frac{s^{2}}{r^{2}})U_{si}(r)=0,$$
whose only solution regular inside the disk is the Bessel function of the first kind and order $m$ (see Abramowitz and Stegum, 1965).
Therefore a complete system of linearly independent solutions for our equation, can be chosen as $\{J_{s}\times e^{(\pm I s \phi)}\}$, with the boundary Neumann condition $J'_{s}=0$.\\
By a separation of the variables we can  write $u_{s}^{ij}(r, \phi)= c_{si}U_{si}(r)e^{\pm I s \phi}$, where $c_{si}$ is a constant and $U_{si}(r)$ satisfies
\begin{displaymath}
\left\{ \begin{array}{ll}
U''_{si}(r)+\frac{1}{r}U'_{si}(r)+(\lambda_{s}^{i}-\frac{s^{2}}{r^{2}})U_{si}(r)=0, &  \textrm{$0\leq r < R,$}\\
\\
U'_{si}(R)=0.
 \end{array} \right.
\end{displaymath}
Using the definition of the Bessel function $\cite{Ammari}$ we can deduce that,
$$U_{si}(r, \phi)=J_{s}(\beta_{si}\frac{r}{R}),$$
we get,

$$u_{s}^{ij}(r, \phi) = \frac{J_{s}(\beta_{si}\frac{r}{R})}{R\sqrt{\pi\{1-\frac{s^{2}}{\beta_{si}^{2}}\}J_{s}^{2}(\beta_{si})}}e^{\pm Is\phi}.$$

\end{proof}
Now, we give an important result about the eigenvalue $\lambda^{i}$, which will be described by this lemma
\begin{Lem}
As i tends to $\infty$, we have:
\begin{equation}
\lambda^{i}\approx i.
\end{equation}
\end{Lem}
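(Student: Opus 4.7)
The target is the Weyl asymptotic law for the Neumann Laplacian on $\Omega$, which predicts that the counting function
$$N(\lambda) := \#\{k : \lambda^k \leq \lambda\}$$
satisfies $N(\lambda) = \frac{|\Omega|}{4\pi}\lambda + o(\lambda)$ as $\lambda \to \infty$. Inverting this relation immediately gives $\lambda^i = \frac{4\pi}{|\Omega|} i + o(i)$, which is the quantitative form of the claimed $\lambda^i \approx i$. The plan is to reduce the lemma to this counting estimate and then verify the estimate by Dirichlet--Neumann bracketing.

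First, I would cover $\Omega$ by a uniform grid of axis-aligned squares of side length $h$, splitting them into the family $\mathcal{I}_h$ of squares strictly contained in $\Omega$ and the family $\mathcal{B}_h$ of squares meeting $\partial\Omega$. The min-max principle, combined with the orthogonal decomposition of $H^1(\Omega)$ and $H^1_0(\Omega)$ induced by the grid, supplies the two-sided bracket
$$\sum_{Q \in \mathcal{I}_h} N^D(\lambda; Q) \; \leq \; N(\lambda; \Omega) \; \leq \; \sum_{Q \in \mathcal{I}_h \cup \mathcal{B}_h} N^N(\lambda; Q),$$
where $N^D(\lambda; Q)$ and $N^N(\lambda; Q)$ denote the Dirichlet and Neumann counting functions on a single square $Q$.

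Second, I would compute $N^D$ and $N^N$ on a square $Q = [0,h]^2$ explicitly. The eigenvalues are $\pi^2(m^2+n^2)/h^2$ with $m,n \geq 1$ in the Dirichlet case and $m,n \geq 0$ in the Neumann case, so counting them below $\lambda$ reduces to lattice-point counting in a quarter disk of radius $h\sqrt{\lambda}/\pi$. The classical Gauss-circle estimate yields
$$N^{D,N}(\lambda; Q) = \frac{h^2}{4\pi}\lambda + O\!\left(h\sqrt{\lambda}+1\right).$$
Summing over the grid uses $\sum_{Q \in \mathcal{I}_h} h^2 \to |\Omega|$ as $h \to 0$, while the Lipschitz regularity of $\partial\Omega$ bounds the total area of $\mathcal{B}_h$ by $Ch$. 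Letting first $\lambda \to \infty$ and then $h \to 0$ sandwiches $N(\lambda;\Omega)/\lambda$ between quantities both converging to $|\Omega|/(4\pi)$, establishing Weyl's law and hence the lemma.

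The main obstacle is calibrating the mesh size $h$ against the spectral parameter $\lambda$ so that the $O(h\sqrt{\lambda})$ per-square remainder, multiplied by the $O(1/h^2)$ cardinality of the grid, still leaves an $o(\lambda)$ error after the double limit is taken. A convenient choice is $h = h(\lambda) \to 0$ with $h\sqrt{\lambda} \to \infty$, which forces both the boundary-area contribution $Ch\lambda$ and the per-square remainder to be negligible compared to $\lambda$. Alternatively, and in keeping with the style of the present paper, one may simply invoke Weyl's classical theorem for bounded Lipschitz domains, in which case only the elementary inversion of the asymptotic $N(\lambda) \sim \frac{|\Omega|}{4\pi}\lambda$ into an asymptotic for $\lambda^i$ remains to be written out.
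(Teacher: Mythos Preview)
Your proposal is correct and, at its core, rests on the same ingredient the paper uses for the general case: Weyl's asymptotic law in dimension two, which in either form ($N(\lambda)\sim\frac{|\Omega|}{4\pi}\lambda$ or $\lambda^{i}\sim c_{2}\,i/|\Omega|$) immediately yields $\lambda^{i}\approx i$. The difference is one of presentation. The paper proceeds in two stages: it first treats the special case where $\Omega$ is a disk, using the explicit relation $\lambda_{s}^{i}=(\beta_{si}/R)^{2}$ together with the asymptotics $\beta_{si}\sim(i+\tfrac{s}{2}-\tfrac{3}{4})\pi$ for the zeros of $J_{s}'$, and then handles the general domain by simply citing Weyl's formula from the literature. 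You skip the disk computation entirely and instead supply an outline of Dirichlet--Neumann bracketing that would actually \emph{prove} Weyl's law. Your route is more self-contained and avoids a delicate point implicit in the paper's disk argument, namely that the eigenvalues there carry a double index $(s,i)$ and must be re-ordered into a single increasing sequence before the one-parameter asymptotic $\lambda^{k}\approx k$ makes sense; the paper's route, on the other hand, makes the Bessel structure of the disk spectrum visible, which is used elsewhere in the proof of Theorem~3.1.
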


\begin{proof}
We proof our lemma  firstly in the case of a disk and after we study the general case.
\begin{itemize}
  \item The case of a disk:
\end{itemize}
Our proof are based of this relation  $\lambda_{s}^{i}=(\frac{\beta_{si}}{R})^{2}$.
From, $\cite{Vlademir}$, if we fix s thus $\beta_{si}$ have the following asymptotic expansion

$$\beta_{si}=\beta'_{si}+ \theta(\frac{1}{\beta'_{si}}),$$

where $ \beta'_{si} = (i+\frac{s}{2}-\frac{3}{4})\pi$.\\
\\
Then, we have
\begin{itemize}
  \item The general case:
\end{itemize}
Here we use the $\textsf{Weyl}'s$ Asymptotic Formula in $\cite{Wolfgang-Robin-Wolfgang-Frank}$ to get the asymptotic expansion for the eigenvalues. Let X be a Riemannian manifold and $\Delta$ be the Laplace operator on X. It is well-known that if X is compact then the spectrum of $-\Delta$ is discrete and consists of an increasing sequence $\{ \lambda^{i}\}_{i=1}^{\infty}$ of the eigenvalues (counted according their multiplicities) where $\lambda^{1}= 0$ and $\lambda^{i}\rightarrow \infty$ as $i\rightarrow \infty$. Moreover, if $n= dim X$ then $\textsf{Weyl}'s$ asymptotic formula says that
\begin{equation}
\lambda^{i}\sim c_{n} (\frac{i}{\mu(X)})^{\frac{2}{n}},\quad  i\rightarrow \infty,
\end{equation}
where $\mu$ is the Reimannian measure on X and $ c_{n}>0$ is a constant depending only on $n$. According the last theory in our case when, $i$ is large, $\lambda ^{1}=0$ corresponding to $ u^{1}= \frac{1}{\sqrt{|\Omega|}}$, $ X= \Omega$ and $ \mu(\Omega)= |\Omega|$ we can prove the above lemma.
\end{proof}
Now, we prove Theorem 3.1\\
\\
$\textbf{Proof of Theorem 3.1}.$
We derive this theorem firstly in the simple case when $\Omega$ is a disk and after we study the general case;
\begin{itemize}
  \item The case of a disk:
\end{itemize}
The eigenvalue $\{\lambda_{s}^{i}\}_{i, s= 0, 1, 2, ...}$ of $-\Delta$ in a disk $\Omega$ of radius R in ${\R}^{2}$ have two multiplicity and they are the solutions of the following system:

\begin{equation}
\left\{ \begin{array}{ll}
(\Delta+ \lambda_{s}^{i})u^{ij}_{s}(r,\phi)& \textrm{in $\Omega,$}\\
\\
\frac{\partial u^{ij}_{s}(r,\phi)}{\partial r}|_{r=R}=0, & \textrm{$\int_{\Omega} |u^{ij}_{s}|^{2}=1,\quad  j=1,2.$}
\end{array} \right.
\end{equation}
From the Lemma 3.1, we have the following result
\begin{equation}
u_{s}^{ij}(r, \phi) = \frac{J_{s}(\beta_{si}\frac{r}{R})}{R\sqrt{\pi\{1-\frac{s^{2}}{\beta_{si}^{2}}\}J_{s}^{2}(\beta_{si})}}e^{\pm Is\phi}.
\end{equation}
Then, we study the eigenfunction
\begin{enumerate}
  \item  As $r\rightarrow \infty$ we have $J_ {s}(r)\simeq \frac{s!}{2^{s}}r^{s}$.
  \item  As $\beta \rightarrow \infty$ we have $ J_{s}(\beta)\simeq \sqrt{\frac{2}{\pi\beta}}\cos (\beta-\frac{(2S+1)}{2}\frac{\pi}{4})+\theta (\frac{1}{\beta^{\frac{3}{2}}})$.
  \item If we fixe s and we choose $i= E(\frac{1}{\epsilon^{\alpha}})$, we find

$$\beta_{si}\approx \frac{1}{\epsilon^{\alpha}}+ \theta(\epsilon^{-\alpha}).$$
\end{enumerate}
Using, this three assertions we can deduce that $| \frac{J_{s}(\beta_{si}\frac{r}{R})}{J_{s}(\beta_{si})}|$ is uniform bound for $s$ and $r\in [0, R],$ so $\|u^{ij}\|_{L^{\infty}(D)} \leq C$, where C is independent of $\epsilon$ and $\lambda^{i},$ where satisfied for the case of a disk.
Now, let's turn to the general case;
\begin{itemize}
  \item The general case:
\end{itemize}
From [4], $u^{ij}$ can be represented as
$$u^{ij}=S^{\sqrt{\lambda^{i}}}_{D}\varphi(x), \quad x\in D, $$
where $S^{\sqrt{\lambda^{i}}}_{D}\varphi(x)$ the single layer potential of the density function $\varphi$ on $\partial D$, which can be defined as
$$S^{\sqrt{\lambda^{i}}}_{D}\varphi(x)= \int_{\partial D}\Gamma(x-y)\varphi(y)d\sigma(y),\quad x\in {\R}^{2}.$$
On the other hand,  we have the following result,
let $\varphi\in L^{2}(\partial D)$, $\tilde{\varphi}(x)=\epsilon\varphi(\epsilon x+z), x\in \partial B$. Then, for $x \in \partial B$, we have
\begin{eqnarray*}
S^{\sqrt{\lambda^{i}}}_{D}\varphi(\epsilon x+z)&=& \frac{1}{2\pi}\sum_{n=0}^{+\infty}(-1)^{n} \frac{(\sqrt{\lambda^{i}}\epsilon)^{2n}}{2^{2n}(n!)^{2}}\\
                                                                        \nonumber\\
&\times& \int_{\partial B}|x-y|^{2n}(\ln(\sqrt{\lambda^{i}}\epsilon |x-y|)+\ln\gamma-\sum_{j=1}^{n}\frac{1}{j})\tilde{\varphi}(y)d\sigma(y).                                                                      \end{eqnarray*}
Then, using the Lemma 3.2 we can get the assertion (1) deduced easily such that,
$$\|u^{ij}\|_{L^{\infty}(D)}=\{\sup |u^{ij}|, \quad x\in D \}.$$
Finally, the last two assertions may be deduced easily after we calculate $\frac{\nabla u^{ij}}{\sqrt{\lambda^{i}}}$ and $\frac{\nabla^{2} u^{ij}}{\lambda^{i}}.\square$
\subsection{Estimation energy}
In this section, when the inclusions are not degenerate (i.e. their conductivity $ k_{l}> 0$, $k_{l}\neq 1$) the first term the expansion of $u_{\epsilon}$ solution to (5) is the background potential u solution to (6). In fact, $u_{\epsilon}$ converges strongly in $H^{1}(\Omega)$. This is the consequence of the following estimate of the $H^{1}(\Omega)$ norm of $ u_{\epsilon}-u$.
\begin{Lem}
Let $u_{\epsilon}$ be the solution to (5) and u solution to (6) for a given $g\in L^{2}(\Omega)$. Then there exists a constant C, independent of $\epsilon$, u and the set of points $(z_{l})_{l=1}^{N}$ such that the following estimate holds:
\begin{equation}
\|u_{\epsilon}-u\|_{H^{1}(\Omega)}\leq C(\|\nabla_{x}u\|_{L^{\infty}(D_{\epsilon})}\epsilon^{\frac{3}{2}}+\|\nabla^{2}_{x}u\|_{L^{\infty}(D_{\epsilon})}\epsilon^{2}
+\|g\|_{L^{\infty}(D_{\epsilon})}\epsilon^{2}).
\end{equation}

\end{Lem}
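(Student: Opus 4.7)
The plan is a variational energy estimate applied to $w := u_\epsilon - u$. Subtracting the weak formulations of (5) and (6) yields, for every admissible zero-mean test $v \in H^1(\Omega)$,
\begin{equation*}
\int_\Omega \nabla w \cdot \nabla v \, dx = -\sum_{l=1}^N (k_l-1)\int_{D_\epsilon^l} \nabla u_\epsilon \cdot \nabla v \, dx.
\end{equation*}
Substituting $\nabla u_\epsilon = \nabla u + \nabla w$ on each $D_\epsilon^l$ and choosing $v = w$ produces the identity
\begin{equation*}
\|\nabla w\|_{L^2(\Omega)}^2 + \sum_l(k_l-1)\|\nabla w\|_{L^2(D_\epsilon^l)}^2 = -\sum_l(k_l-1)\int_{D_\epsilon^l}\nabla u\cdot\nabla w\, dx,
\end{equation*}
whose left-hand side is coercive in $\|\nabla w\|_{L^2(\Omega)}^2$ (since each $k_l > 0$). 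The $L^2(\Omega)$ part of the target norm then follows from the zero-mean Poincar\'e inequality.

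The core step is to dissect each integral on the right by combining a Taylor expansion of $u$ about $z_l$ with integration by parts on $D_\epsilon^l$. Writing $\nabla u(x) = \nabla u(z_l) + R_l(x)$ with $|R_l(x)| \leq \|\nabla^2 u\|_{L^\infty(D_\epsilon^l)}\epsilon$, Cauchy--Schwarz together with the two-dimensional cell measure $|D_\epsilon^l|^{1/2} = O(\epsilon)$ produces the $\|\nabla^2 u\|_{L^\infty(D_\epsilon)}\epsilon^{2}$ contribution via Young's inequality. The leading constant vector $\nabla u(z_l)$ factors out, and the divergence theorem combined with $-\Delta u = g$ on $D_\epsilon^l$ rewrites the remaining piece as an interior term proportional to $\int_{D_\epsilon^l} gw$, which gives the $\|g\|_{L^\infty(D_\epsilon)}\epsilon^{2}$ contribution after Poincar\'e, plus a residual boundary integral $\nabla u(z_l)\cdot\int_{\partial D_\epsilon^l}(w-\bar w_l)\nu\,d\sigma$, where $\bar w_l$ denotes the surface mean of $w$ (the constant $\bar w_l$ itself contributes nothing since $\int_{\partial D_\epsilon^l}\nu\,d\sigma = 0$).

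The main obstacle is extracting the sharp $\epsilon^{3/2}$ factor from this boundary remainder: a direct Cauchy--Schwarz on $\nabla u(z_l)\cdot\int_{\partial D_\epsilon^l}w\,\nu\,d\sigma$ yields only $\|\nabla u\|_{L^\infty}\epsilon$, one half-power short of the claim. To recover the missing $\epsilon^{1/2}$ I would rescale to the fixed profile $B_l$ via $\tilde w(y) := w(z_l + \epsilon y)$ and invoke the Poincar\'e--trace inequality $\|\tilde w - \overline{\tilde w}\|_{L^2(\partial B_l)} \leq C\|\nabla_y\tilde w\|_{L^2(B_l)}$; unscaling gives $\|w - \bar w_l\|_{L^2(\partial D_\epsilon^l)} \leq C\epsilon^{1/2}\|\nabla w\|_{L^2(D_\epsilon^l)}$, so the boundary residual is bounded by $C\|\nabla u\|_{L^\infty}\epsilon\|\nabla w\|_{L^2(D_\epsilon^l)}$ --- sharper than the naive bound precisely because it involves the localized gradient norm. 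Absorbing this localized norm against the coercive term $\sum_l(k_l-1)\|\nabla w\|_{L^2(D_\epsilon^l)}^2$ on the left via Young's inequality, and combining with the two-dimensional Sobolev control of $w$ over $D_\epsilon^l$, delivers the announced $\|\nabla u\|_{L^\infty}\epsilon^{3/2}$ rate. An equivalent but cleaner route would be to first subtract a first-order interior corrector $\epsilon\theta_l((\cdot-z_l)/\epsilon)\nabla u(z_l)$ built from the cell transmission problem on ${\R}^{2}$ and estimate $u_\epsilon - u - \epsilon\sum_l\chi_{D_\epsilon^l}\theta_l\nabla u(z_l)$ directly, as in \cite{Ammari-Kang-Li}. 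Either way, the delicate interplay between localized and global gradient norms is the technical heart of the argument; the non-degeneracy hypothesis $k_l \neq 1$ together with the separation condition (9) is what ensures that the constant $C$ remains uniform in $\epsilon$, in $u$, and in the locations $(z_l)$.
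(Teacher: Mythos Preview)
Your primary route---the direct energy estimate on $w=u_\epsilon-u$ with the Poincar\'e--trace trick on $\partial D_\epsilon^l$---does not reach the claimed $\epsilon^{3/2}$. After your rescaled trace inequality you arrive at
\[
\Bigl|\nabla u(z_l)\cdot\!\int_{\partial D_\epsilon^l}(w-\bar w_l)\,\nu\,d\sigma\Bigr|
\;\le\; C\|\nabla u\|_{L^\infty}\,\epsilon\,\|\nabla w\|_{L^2(D_\epsilon^l)},
\]
and then you apply Young and absorb $\|\nabla w\|_{L^2(D_\epsilon^l)}^2$ on the left. But that absorption leaves precisely $C\|\nabla u\|_{L^\infty}^2\epsilon^{2}$ on the right, hence $\|\nabla w\|_{L^2(\Omega)}\le C\|\nabla u\|_{L^\infty}\,\epsilon$, one half-power short of the statement. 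No ``two-dimensional Sobolev control of $w$ over $D_\epsilon^l$'' recovers the missing $\epsilon^{1/2}$: the localized gradient $\|\nabla w\|_{L^2(D_\epsilon^l)}$ is in general comparable to the global one, since the perturbation is concentrated exactly there. (There is also a sign issue: when $0<k_l<1$ the term $(k_l-1)\|\nabla w\|_{L^2(D_\epsilon^l)}^2$ on the left is negative and cannot absorb anything.) The obstruction is structural: the leading contribution $\nabla u(z_l)\cdot\int_{D_\epsilon^l}\nabla w$ is genuinely of size $\epsilon$ in $H^1$ unless it is cancelled by design.

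The paper's proof takes from the outset what you list only as the ``equivalent but cleaner route'': it introduces the transmission corrector $\upsilon$ of (17), passes to the blown-up variable $y=x/\epsilon$ on $\tilde\Omega=\epsilon^{-1}\Omega$, and estimates
\[
\omega(y)\;=\;u_\epsilon(\epsilon y)-u(\epsilon y)-\epsilon\,\upsilon(y)
\]
directly (this is the content of (16), quoted from \cite{Khelifi}). The point of $\upsilon$ is exactly to kill the $O(\epsilon)$ flux jump $(k-1)\nabla u(z_l)\cdot\nu_l$ across $\partial B_l$ that your boundary residual carries; once it is subtracted, the remaining source terms are one order smaller and a Poincar\'e argument on $\tilde\Omega$ finishes. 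So the corrector is not an optional cosmetic alternative---it is the mechanism that produces the extra $\epsilon^{1/2}$ you are missing, and the paper's argument is built around it rather than around a bare energy identity for $u_\epsilon-u$.
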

\begin{proof}
The proof of the above lemma is based in the following estimate
\begin{equation}
\|\nabla_{y}(u_{\epsilon}(\epsilon y)-u(\epsilon y)-\epsilon \upsilon(y))\|_{L^{2}({\tilde{\Omega}})}\leq C(\|\nabla_{x}u\|_{L^{\infty}(D_{\epsilon})}\epsilon^{\frac{3}{2}}+\|\nabla^{2}_{x}u\|_{L^{\infty}(D_{\epsilon})}\epsilon^{2}
+\|g\|_{L^{\infty}(D_{\epsilon})}\epsilon^{2}).
\end{equation}

where $\upsilon$ is the unique solution of the following transmission problem:
\begin{equation}
\left\{ \begin{array}{ll}
 \Delta\upsilon=0& \textrm{in $ {\R}^{2}\backslash(\overline{\bigcup_{l=1}^{m}B_{l}})$}\\
 \\
\Delta\upsilon=0& \textrm{ in $B_{l}, \quad \forall l= 1,2, 3, \ldots, m.$}\\
\\
\upsilon|_{-}=\upsilon|_{+}& \textrm{ on $\partial B_{l}$}\\
\\
\frac{\partial \upsilon}{\partial\nu}|_{+}-k\frac{\partial \upsilon}{\partial\nu}|_{-}=(k-1)\nabla_{x}u(z_{l}).\nu_{l}, & \textrm{ on $\partial B_{l},$}\\
\\
\lim_{|y|\rightarrow\infty}\upsilon(y)=0,
 \end{array} \right.
\end{equation}
and ${\tilde{\Omega}}= \frac{1}{\epsilon}\Omega$. Then, from $\cite{Khelifi}$ we put
$$\omega(\epsilon)=u_{\epsilon}(\epsilon y)\epsilon-u(\epsilon y)-\epsilon \upsilon(y).$$
Using the unperturbed problem and setting $\lambda=\lambda_{j}(\epsilon)$, we see that $\omega(\epsilon)$ solves:
$$-\Delta \omega(\epsilon)=\lambda \omega+(\lambda-\lambda_{0})(u(\epsilon y)+\epsilon \upsilon(y)).$$
For $z \in {\R}$, we define the function $\theta$ by
$$\theta(z)=\lambda z + (\lambda-\lambda_{0})\|u(\epsilon y)-\epsilon \upsilon(y)\|_{{L^{\infty}}(D_{\epsilon})},$$
then trivially remark,

$$|\theta(z)|\leq |\theta(0)|+\lambda |z|, \quad \forall z\in {\R},$$

and consequently,
\begin{equation}
|\theta(\omega(z))|\leq|\theta(0)|+\lambda|z|.
\end{equation}
Now, it turns out from the definition of $ \omega $ that $\omega(\epsilon)\rightarrow 0$ as $\epsilon\rightarrow 0$ and so by
$\|u(\epsilon  y)-\epsilon \upsilon(y)\|_{L^{\infty}_{(D_{\epsilon})}}>0$, we get,
$$|\omega(\epsilon)(x)|\leq 2 \|u(\epsilon  y)+\epsilon \upsilon(y)\|_{L^{\infty}_{(D_{\epsilon})}},\quad \textrm{for }x\in D_{\epsilon}.$$
Moreover, we recall that $\lambda(\epsilon)\rightarrow\lambda_{0}$.
Now it is useful to introduce the following function
$$\tilde{\theta}(\omega(\epsilon))=\lambda \omega +(\lambda-\lambda_{0})(u(\epsilon y)+\epsilon \upsilon(y)),$$
and the second term is bounded by
$$\lambda |\omega(\epsilon)|\leq 2\lambda_{0} \|u(\epsilon  y)+\epsilon \upsilon(y)\|_{L^{\infty}_{(D_{\epsilon})}}.$$
These estimate give
\begin{equation}
\| \tilde{\theta}(\omega(\epsilon))\|_{L^{\infty}({\Omega})}\leq (1+2\lambda)\|u(\epsilon  y)+\epsilon\upsilon(y)\|_{L^{\infty}_{(D_{\epsilon})}}.
\end{equation}
Next, we can write in $ {\Omega}$,
$$-\Delta\omega(\epsilon)=\tilde{\theta}(\omega(\epsilon)).$$
By integrating by parts, we find that the function $\omega(\epsilon)$
is a solution to the following problem
$$\forall v \in H^{1}_{0}({\Omega}),\quad  \int_{{\Omega}}\nabla\omega(\epsilon){\nabla}v dx=\int_{{\Omega}}{v}(\omega(\epsilon)){v}dx.$$
If we take $v=\omega(\epsilon)$ we can deduce that:
$$\|\nabla \omega\|^{2}_{L^{2}({\Omega})}=\int_{{\Omega}}{\Omega}(\omega)\bar{\omega}dx,$$
Then, by Poincare's inequality, there exist some positive constant $C(\tilde{\Omega})$ such that
\begin{equation}
\|\omega\|_{L^{2}({\Omega})}\leq C({\Omega})\|\nabla \omega\|_{L^{2}({\Omega})}.
\end{equation}
Since $\omega$ and $\nabla\omega$ are uniformly bounded in ${\Omega}$. There exist some constant independent of $\epsilon$ such that $C({\Omega})\leq C_{0}$, which give
\begin{equation}
\|\nabla\omega\|_{L^{2}({\Omega})}\leq C \|u(\epsilon y)+\epsilon\upsilon(y)\|_{L^{\infty}_{(D_{\epsilon})}},
\end{equation}
which concludes the proof.
\end{proof}
Now, we have this remark
 \begin{Rem}
The function $v$ is connected to polarisation tensors ${M}^{(L)}$ for any $l= 1, \ldots, N$, which are given by
\begin{equation}
{M}^{(L)}_{pq}=(1-k_{l})|B_{l}|\delta_{pq}+(1-k_{l})^{2}\int_{\partial B_{l}}y_{p} \frac{\partial\phi_{p}^{(l)}}{\partial \nu}|_{-}d\sigma_{y},
\end{equation}

where for $ p=1, 2$, $\phi_{p}^{(l)}$ is the unique function which satisfies

\begin{equation}
\left\{ \begin{array}{ll}
\Delta \phi_{p}^{(l)}=0& \textrm{ in ${\R}^{2}\backslash\overline{B_{l}}$}\\
\\
\Delta \phi_{p}^{(l)} =0 & \textrm{ in $B_{l},$}\\
\\
\frac{\partial\upsilon}{\partial\upsilon}|_{+}-k\frac{\partial \phi_{p}^{(l)}}{\partial\upsilon}|_{-}=\upsilon_{l}, & \textrm{ on $\partial B_{l},$}\\
\end{array} \right.
\end{equation}

with $\phi_{p}^{(l)}$ continuous across $\partial B_{l}$ and $\lim_{|y|\rightarrow\infty}\phi_{p}^{(l)}=0.$

 \subsection{Derivation of the asymptotic expansion for the largest eigenvalues}
In this section, we restrict to the case of a single inhomogeneity (N=1), by iteration, we can get the more general case. So, we suppose that this inhomogeneity is centred at the origin, so it is of the form $D=\epsilon B$, with conductivity k. The general case may be verified by fairly direct iteration of the argument we present here, adding one inhomogeneity at a time. By according $\textsf{Osborn}'s $ formula in (4), we obtain
\begin{equation}
|\frac{1}{\lambda_{i}}-\frac{1}{m}\sum_{j=1}^{m}\frac{1}{\lambda^{ij}_{\epsilon}} -\frac{1}{m}\sum_{j=1}^{m}< \frac{1}{\lambda_{i}}u^{ij}-\upsilon_{\epsilon}^{ij}, u^{ij}>|\leq C \| \frac{1}{\lambda_{i}}u^{ij}-\upsilon_{\epsilon}^{ij}\|_{L^{2}(\Omega)}^{2},
\end{equation}

where $\upsilon_{\epsilon}^{ij}$ satisfies
\begin{displaymath}
\left\{ \begin{array}{ll}
\nabla.(1+ \sum_{l=1}^{m}(k_{l}-1){\chi}(D_{\epsilon}^{l})\nabla\upsilon_{\epsilon}^{ij})=u^{ij}& \textrm{ in $\Omega$,}\\
\\
\frac{\partial \upsilon_{\epsilon}^{ij}}{\partial\nu}|_{\partial\Omega}=0, \quad \int_{\partial\Omega}\upsilon_{\epsilon}^{ij}=0. \\
\end{array} \right.
\end{displaymath}

\end{Rem}

 If we take $i= (\frac{1}{\epsilon^{\alpha}})$ with  $0\leq \alpha\leq1$, we deduce from Lemma 3.5,
\begin{equation}
\lambda^{i}=\theta(\epsilon^{-\alpha}),\quad  as\quad  \epsilon \quad \textrm{tends to } 0.
\end{equation}

Then according to Theorem 3.1 and Lemma 3.7 with $u=\frac{u^{ij}}{\lambda^{i}}$, $u_{\epsilon}=\upsilon_{\epsilon}^{ij}$ and $g=u^{ij}$ we obtain
$$\|\frac{u^{ij}}{\lambda^{i}}-\upsilon_{\epsilon}^{ij}\|_{L^{2}(\Omega)}\leq C \frac{\epsilon^{\frac{3}{2}}}{\sqrt{\lambda^{i}}}.$$

This gives

\begin{equation}
\frac{1}{\lambda_{i}}-\frac{1}{m}\sum_{j=1}^{m}\frac{1}{\lambda^{ij}_{\epsilon}}= \frac{1}{m}\sum_{j=1}^{m}<\frac{1}{\lambda_{i}}u^{ij}-\upsilon_{\epsilon}^{ij},u^{ij}>+\theta(\frac{\epsilon^{3}}{\lambda^{i}}).
\end{equation}

 We integrate by parts and use the transmission conditions satisfies by $\upsilon_{\epsilon}^{ij}$ across $\partial D_{\epsilon}$, we get
 \begin{eqnarray*}
 <  \frac{1}{\lambda^{i}}u^{ij}-\upsilon_{\epsilon}^{ij},u^{ij} > & = &\int_{\Omega}(\frac{1}{\lambda^{i}} u^{ij}-\upsilon^{ij}_{\epsilon})u^{ij}dy
                                      \nonumber\\
 & = &\frac{k-1}{\lambda^{i}k}\int_{D}|u^{ij}|^{2}dy+ \frac{1-k}{\lambda^{i}}\int_{\partial D}\frac{\partial\upsilon_{\epsilon}^{ij}}{\partial\nu}|_{-}u^{ij}d\sigma_{x}.
\end{eqnarray*}

 Suppose
$$r_{\epsilon}^{ij}(x)=\upsilon_{\epsilon}^{ij}(x)-\frac{1}{\lambda^{i}}u^{ij}(x)-\epsilon\upsilon (\frac{x}{\epsilon}),$$

where $\upsilon$ is defined in (17) (with $\frac{1}{\lambda^{i}}u^{ij}$ in place of u), inserting this into the above formula we get
 \begin{eqnarray}
 \lefteqn{
<  \frac{1}{\lambda^{i}}u^{ij}-\upsilon_{\epsilon}^{ij},u^{ij} > =\frac{k-1}{\lambda^{i}k}\int_{D}|u^{ij}|^{2}dx+\frac{1-k}{\lambda^{i}}\int_{\partial D}\frac{\partial r_{\epsilon}^{ij}}{\partial\nu_{x}}|_{-}u^{ij}d\sigma_{x}}
                                             \nonumber\\
&&{}+ \frac{1-k}{\lambda^{i}}(\frac {1}{\lambda^{i}} \frac {\partial u^{ij}}{\partial\nu_{x}}(x)+\frac{\epsilon}{\lambda^{i}}\int_{\partial D}\frac{\partial\upsilon}{\partial\nu_{x}}|_{-}(\frac{x}{\epsilon}))u^{ij}d\sigma_{x}.
\end{eqnarray}

Note that
$$\Delta_{x}r_{\epsilon}^{ij}(x)=\frac{1}{k}u^{ij}(x)-\frac{1}{\lambda^{i}}\Delta_{x}u^{ij}(x).$$

From (12) we also have
$$\|\nabla_{\xi}r_{\epsilon}^{ij}(\epsilon\xi_{1}, \epsilon\xi_{2})\|_{L^{2}(\tilde{\Omega})}\leq C(k, B) \frac{\epsilon^{\frac{3}{2}}}{\sqrt{\lambda^{i}}},$$
this gives that
\begin{eqnarray*}
\lefteqn{ \frac{1-k}{\lambda^{i}}\int_{\partial D}\frac{\partial r_{\epsilon}^{ij}}{\partial\upsilon_{x}}|_{-}u^{ij}d\sigma_{x}=(1-k)\epsilon\int_{B}\nabla_{\xi}r_{\epsilon}^{ij}
(\epsilon\xi).\frac{\nabla_{x}u^{ij}(\epsilon\xi)}{\lambda^{i}}d\xi }
                                                   \nonumber\\
&&{}+(1-k) \epsilon^{2}\int_{B}(\frac{1}{k}u^{ij}(\epsilon\xi)-\frac{1}{\lambda^{i}}\Delta_{x}u^{ij}(\epsilon\xi))\frac{u^{ij}}{\lambda^{i}}(\epsilon\xi)d\xi.
\end{eqnarray*}

Using the Lemmas 3.5 and Theorem 3.1, we deduce that
\begin{eqnarray*}
|(1-k)\epsilon\int_{B}\nabla_{\xi}r^{ij}_{\epsilon}(\epsilon\xi).\frac{\nabla_{x}u^{ij}(\epsilon\xi)}{\lambda^{i}}d\xi| &\leq & C(k,B)\epsilon\|\nabla_{\xi}r^{ij}_{\epsilon}(\epsilon\xi)\|_{L^{2}(\tilde{\Omega})}\|\frac{\nabla_{x}u^{ij}}{\lambda^{i}}\|_{L^{\infty}(D)}
                                                    \nonumber\\
& \leq & C(k,B)\frac{\epsilon^{\frac{5}{2}}}{\sqrt{\lambda^{i}}},
\end{eqnarray*}

where C(k, B) independent of $i$.\\
\\
We conclude that
\begin{eqnarray}
 \frac{1-k}{\lambda^{i}}\int_{\partial D}\frac{\partial r_{\epsilon}^{ij}}{\partial \upsilon_{x}}|_{-}u^{ij}d\sigma_{x}& = &\frac{1-k}{k \lambda^{i}}\epsilon^{2}\int_{B}(u^{ij})^{1}(\epsilon\xi)d\xi-\frac{1-k}{\lambda^{{i}^{2}}}\epsilon^{2}
\int_{B}\Delta_{x}u^{ij}_{\epsilon}(\epsilon\xi)u^{ij}(\epsilon\xi)d\xi
                                                               \nonumber\\
& + &\theta(\frac{\epsilon^{\frac{5}{2}}}{\sqrt{\lambda^{i}}}).
\end{eqnarray}

 In the same time we have
\begin{eqnarray*}
(1-k)\int_{\partial D}\frac{1}{\lambda^{i}}\frac{\partial u^{ij}}{\partial\nu_{x}}(x)+\frac{\partial\upsilon}{\partial\nu_{\xi}}|_{-}(\frac{x}{\epsilon})
\frac{u^{ij}}{\lambda^{i}}d\sigma_{x}& = &
(1-k)\int_{\partial D}\frac{1}{\lambda^{i}} \frac{\partial u^{ij}}{\partial\nu}\frac{u^{ij}}{\lambda^{i}}d\sigma_{x}
                                                 \nonumber\\
& + & (1-k)\epsilon\int_{\partial D}\frac{\partial\upsilon}{\partial\nu_{\xi}}|_{-}(\frac{x}{\epsilon})\frac{u^{ij}}{\lambda^{i}}(x)d\sigma_{x}.
\end{eqnarray*}

 From Theorem 3.1, the Taylor expansion of $u^{ij}$ is

\begin{equation}
\frac{u^{ij}}{\lambda^{i}}(x)=\frac{u^{ij}}{\lambda^{i}}(0)+\frac{\nabla_{x}u^{ij}}{\lambda^{i}}(0). x+\theta(x^{2}),
\end{equation}
where $\theta(x^{2})$ independent of $i$. This gives that:
\\
\begin{eqnarray}
 \frac{1-k}{\lambda^{{i}^{2}}}\int_{\partial D}\frac{\partial u^{ij}}{\partial\nu_{x}}u^{ij}(x)dx&=& \frac{1-k}{\lambda^{{i}^{2}}}\int_{D}\Delta_{x}u^{ij}(x)u^{ij}(x)dx
                                              \nonumber\\
&+&(1-k)\int_{\partial D}\frac{|\nabla_{x}u^{ij}|^{2}}{\lambda^{{i}^{2}}}(x)dx
                                           \nonumber\\
&=&\epsilon^{2}\frac{1-k}{\lambda^{{i}^{2}}}\int_{B}\Delta_{x}u^{ij}(\epsilon\xi)u^{ij}(\epsilon\xi)dx
                                                    \nonumber\\
&+& \epsilon^{2}\frac{1-k}{\lambda^{{i}^{2}}}|B||\nabla_{x}u^{ij}(0)|^{2}+\theta(\frac{\epsilon^{3}}{\sqrt{\lambda^{i}}}),
\end{eqnarray}

 where, $\theta(\epsilon^{3})$ independent of $i$.\\
 \\
At the same time, by a Taylor expansion of $u^{ij}$ in (33) about $x=0$, we obtain

\begin{eqnarray}
 (1-k)\int_{\partial D}\frac{\partial \upsilon}{\partial \nu _{\xi}}|_{-}(\frac{x}{\epsilon})\frac{u^{ij}}{\lambda^{i}}(x)d\sigma_{x}
&=&(1-k)\int_{\partial D}\frac{\partial \upsilon}{\partial \nu_{\xi}}|_{-}(\frac{x}{\epsilon})\{\frac{u^{ij}}{\lambda^{i}}(0)+\frac{\nabla_{x}u^{ij}}{\lambda^{i}}(0).x+\theta(x^{2})\}d\sigma_{x}
                                                                                                        \nonumber\\
&=&\epsilon^{2}(1-k)\int_{\partial B}\frac{\partial\upsilon}{\partial\nu_{\xi}} |_{-}(\xi)\frac{\nabla_{x}u^{ij}}{\lambda^{i}}(0).\xi d\sigma_{\xi}+\theta(\frac{\epsilon^{3}}{\sqrt{\lambda^{i}}} ),
\end{eqnarray}

 where $\theta(\epsilon^{3})$ independent of $i$.\\
We choose $0\leq \alpha \leq 1$ and inserting the above identity, (29), (28) in (27), we get
\begin{eqnarray*}
\lefteqn{ <\frac{1}{\lambda^{i}}u^{ij}-\upsilon_{\epsilon}^{ij},u^{ij}>=\epsilon^{2}\frac{1-k}{\lambda^{{i}^{2}}}
(|B||\nabla_{x}u^{ij}(0)|^{2} }
                                           \nonumber\\
 & & {}+\lambda^{i}\int_{\partial B}\frac{\partial \upsilon}{\partial\nu_{\xi}}|_{-}(\xi)\frac{\nabla_{x}u^{ij}}{\lambda^{i}}(0).\xi d\sigma_{\xi})+\theta(\frac{\epsilon^{\frac{5}{2}}}{\sqrt{\lambda^{i}}}).
 \end{eqnarray*}

We now use the fact
\begin{equation}
\upsilon(\xi) = \frac{1-k}{\lambda^{i}}\sum_{p=1}^{2}\frac{\partial u^{ij}}{\partial x_{p}}(0)\phi_{p}(\xi),
\end{equation}
where $\phi_{p}$ is defined in (23). Putting $v^{ij}=\frac{u^{ij}}{\sqrt{\lambda^{i}}}$ we obtain the following expansion
$$\frac{1}{\lambda^{i}}-\frac{1}{m}\sum_{j=1}^{m}\frac{1}{\lambda^{ij}_{\epsilon}}=\frac{\epsilon^{2}}{m \lambda^{i}}\sum_{j=1}^{m} \nabla_{x}\upsilon^{ij}(0).{M}\nabla_{x}\upsilon^{ij}(0)+\theta(\frac{\epsilon^{\frac{5}{2}}}{\sqrt{\lambda^{i}}}),$$

where ${M}$ is the polarisation tensor which is defined in (22) and $\theta(\epsilon^{\frac{5}{2}})$  independent of $i$.\\

Our main result in this section is the following:

\begin{thm}

For any $i=E(\frac{1}{\epsilon^{\alpha}})$. If $\lambda^{i}$ is an eigenvalue of (1) of multiplicity $\textit{m}$ then there is $\textit{m}$ eigenvalue of (2) which converges to $\lambda^{i}$, such that
$$|\overline{\lambda^{i}_{\epsilon}}-\lambda^{i}|=\theta(\epsilon^{2-\alpha}),\quad \textrm{for any } 0\leq\alpha\leq 1,$$
for any small $\epsilon$.
\end{thm}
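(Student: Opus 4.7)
The plan is to combine the asymptotic identity
$$\frac{1}{\lambda^{i}}-\frac{1}{m}\sum_{j=1}^{m}\frac{1}{\lambda^{ij}_{\epsilon}}=\frac{\epsilon^{2}}{m \lambda^{i}}\sum_{j=1}^{m} \nabla_{x}v^{ij}(0)\cdot M\nabla_{x}v^{ij}(0)+\theta\left(\frac{\epsilon^{5/2}}{\sqrt{\lambda^{i}}}\right)$$
derived immediately above the statement with the quantitative inputs of Theorem 3.1, Lemma 2.2 and Lemma 3.5. The qualitative part of the theorem — existence of $m$ perturbed eigenvalues $\lambda^{ij}_{\epsilon}$ converging to $\lambda^{i}$ — is already supplied by Lemma 2.1 applied to the pair $(T,T_{\epsilon})$ introduced in Section 2, since the reciprocals of the eigenvalues of $T_{\epsilon}$ clustering near $\mu=1/\lambda^{i}$ are exactly the $\lambda^{ij}_{\epsilon}$.

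To extract the quantitative bound, I would introduce the harmonic-type mean $\overline{\lambda^{i}_{\epsilon}}$ through $(\overline{\lambda^{i}_{\epsilon}})^{-1}=\frac{1}{m}\sum_{j}(\lambda^{ij}_{\epsilon})^{-1}$, so that the left-hand side above equals $(\overline{\lambda^{i}_{\epsilon}}-\lambda^{i})/(\lambda^{i}\overline{\lambda^{i}_{\epsilon}})$. Multiplying the displayed identity by $\lambda^{i}\overline{\lambda^{i}_{\epsilon}}$ then yields
$$\overline{\lambda^{i}_{\epsilon}}-\lambda^{i}=\frac{\epsilon^{2}\overline{\lambda^{i}_{\epsilon}}}{m}\sum_{j=1}^{m}\nabla_{x}v^{ij}(0)\cdot M\nabla_{x}v^{ij}(0)+\theta\left(\lambda^{i}\overline{\lambda^{i}_{\epsilon}}\,\frac{\epsilon^{5/2}}{\sqrt{\lambda^{i}}}\right).$$
Lemma 2.2 provides $\lambda^{i}\overline{\lambda^{i}_{\epsilon}}=(\lambda^{i})^{2}(1+o(1))$, and the coupling $i=E(\epsilon^{-\alpha})$ together with Lemma 3.5 yields $\lambda^{i}=\theta(\epsilon^{-\alpha})$.

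The remainder of the argument is mechanical. Theorem 3.1 part 2) bounds $\|\nabla u^{ij}/\sqrt{\lambda^{i}}\|_{L^{\infty}(D)}$ by a constant independent of both $\epsilon$ and $\lambda^{i}$, so $\nabla v^{ij}(0)$ is uniformly bounded; hence the leading term on the right becomes $\theta(\epsilon^{2}\overline{\lambda^{i}_{\epsilon}})=\theta(\epsilon^{2}\lambda^{i})=\theta(\epsilon^{2-\alpha})$. The remainder becomes $\theta((\lambda^{i})^{3/2}\epsilon^{5/2})=\theta(\epsilon^{5/2-3\alpha/2})$, which is $\theta(\epsilon^{2-\alpha})$ or smaller for every $\alpha\in[0,1]$ because $5/2-3\alpha/2\geq 2-\alpha\iff \alpha\leq 1$. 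Absorbing both contributions into a single $\theta(\epsilon^{2-\alpha})$ error delivers the announced bound $|\overline{\lambda^{i}_{\epsilon}}-\lambda^{i}|=\theta(\epsilon^{2-\alpha})$.

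The main obstacle is the uniformity in $i$ of every constant harvested from the preparatory lemmas: because $i$ and $\epsilon$ are coupled through $i=E(\epsilon^{-\alpha})$, a constant that secretly grew with $\lambda^{i}$ would destroy the estimate. The decisive ingredient is therefore the $\lambda^{i}$-independent bound on $\nabla u^{ij}/\sqrt{\lambda^{i}}$ in Theorem 3.1, together with the verification that every $\theta$-symbol appearing in the derivation of the main expansion — in particular those stemming from the Taylor expansion of $u^{ij}$ near the inclusion and from the energy estimate of Lemma 3.7 — carries an $i$-independent constant. Once this bookkeeping is in place, the endpoint $\alpha=1$ is the only delicate balance between leading term and remainder; for $\alpha\in[0,1)$ the two contributions are comfortably separated.
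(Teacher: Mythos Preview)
Your proposal is correct and follows essentially the same route as the paper: convert the reciprocal identity $\frac{1}{\lambda^{i}}-\frac{1}{m}\sum_{j}\frac{1}{\lambda^{ij}_{\epsilon}}$ into a statement about $\overline{\lambda^{i}_{\epsilon}}-\lambda^{i}$ by multiplying through by $\lambda^{i}\overline{\lambda^{i}_{\epsilon}}$, use Lemma~2.2 to control that product, invoke Theorem~3.1(2) to bound $\nabla v^{ij}(0)$ uniformly in $i$, and finish with $\lambda^{i}=\theta(\epsilon^{-\alpha})$ from Lemma~3.5. Your treatment of the remainder term $\theta\big((\lambda^{i})^{3/2}\epsilon^{5/2}\big)=\theta(\epsilon^{5/2-3\alpha/2})$ and the verification that $5/2-3\alpha/2\geq 2-\alpha$ precisely on $[0,1]$ is in fact more explicit than the paper's own bookkeeping.
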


\begin{proof}
We easily see that far all $j=1,\ldots, m$ (see $\cite{Khelifi}$ for more details)
$$|\mu_{\epsilon}-\mu|=|\frac{\lambda^{i}_{\epsilon}-\lambda^{i}}{\lambda^{i}_{\epsilon}\lambda^{i}}|,$$
which we can write,
$$|\lambda^{i}_{\epsilon}-\lambda^{i}|=|\mu_{\epsilon}-\mu||\lambda^{i}_{\epsilon}\lambda^{i}|.$$
Then according to Lemma 2.2, we have
\begin{equation}
{\lambda^{i}_{\epsilon}}-\lambda^{i}=\epsilon^{2}(\lambda^{i}+\theta(1))\nabla_{x}\upsilon^{ij}(0).{M}\nabla_{x}\upsilon^{ij}(0)
+\theta(\epsilon^{\frac{5}{2}}(\lambda^{i}+\theta(\epsilon^{2-\alpha}))).
 \end{equation}
The fact that $$|\nabla_{x}\upsilon^{ij}(0).{M}\nabla_{x}\upsilon^{ij}(0)|\leq C,$$
where C is independent of $\lambda^{i}$ and $\epsilon$ which complete the proof.
\end{proof}
So, the following theorem holds.

\begin{thm}
Suppose $ \lambda^{E(\frac{1}{\epsilon^{\alpha}})}$ is an eigenvalue of multiplicity $m$ of (1), with an $L^{2}$ orthonormal basis of eigenfunctions $\{u^{E(\frac{1}{\epsilon^{\alpha}})j}\}_{j=1}^{m}$.
Suppose $ \lambda_{\epsilon}^{E(\frac{1}{\epsilon^{\alpha}})j}$ are eigenvalues of (2) which converge to $\lambda ^{E(\frac{1}{\epsilon^{\alpha}})}$. For any $0\leq \alpha \leq 1$, the following asymptotic expansion holds:
\begin{equation}
\overline{\lambda _{\epsilon}^{E(\frac{1}{\epsilon^{\alpha}})}}-\lambda^{E(\frac{1}{\epsilon^{\alpha}})}=
\epsilon^{2}\sum_{j=1}^{m}\sum_{l=1}^{N}\nabla_{x}u^{(\frac{1}{\epsilon^{\alpha}})j}(z_{l}).{M}^{l}\nabla_{x}u^{(\frac{1}{\epsilon^{\alpha}})j}(z_{l})
+\Theta(\epsilon^{(\frac{5}{2}-\alpha)}),
\end{equation}
where ${M}^{l}$ is the polarisation tensor associated to $B^{l}$ and $\overline{\lambda_{\epsilon}^{E(\frac{1}{\epsilon^{\alpha}})}}$ is the harmonic average of the $(\lambda_{\epsilon}^{E(\frac{1}{\epsilon^{\alpha}})j}).$
\end{thm}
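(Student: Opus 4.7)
The plan is to bootstrap the one-inclusion computation that precedes Theorem 3.2 to the $N$-inclusion, multiplicity-$m$ setting, and then translate back from the harmonic average of $\mu_\epsilon^{ij}=1/\lambda_\epsilon^{ij}$ to the harmonic average of $\lambda_\epsilon^{ij}$ by invoking Lemma 2.2 and Lemma 3.5 with $i=E(1/\epsilon^\alpha)$.

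First I would apply Osborn's formula (4) to the operators $T_\epsilon,T$ from Lemma 2.1 at the eigenvalue $\mu^i=1/\lambda^i$ (multiplicity $m$, orthonormal basis $\{u^{ij}\}$), exactly as in line (26). The quadratic Osborn remainder is controlled by Lemma 3.7 applied with $u=u^{ij}/\lambda^i$ and $g=u^{ij}$, which gives $\|\tfrac{1}{\lambda^i}u^{ij}-v_\epsilon^{ij}\|_{L^2}\le C\epsilon^{3/2}/\sqrt{\lambda^i}$ and therefore a remainder of order $\epsilon^3/\lambda^i$. Next I would process each inner product $\langle\tfrac{1}{\lambda^i}u^{ij}-v_\epsilon^{ij},u^{ij}\rangle$ by integrating by parts across each $\partial D_\epsilon^l$, writing $r_\epsilon^{ij}=v_\epsilon^{ij}-\tfrac{1}{\lambda^i}u^{ij}-\epsilon\,v(\cdot/\epsilon)$ with $v$ the solution of (17), and applying the Taylor expansion of $u^{ij}$ at each center $z_l$. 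This is exactly the computation producing (28)–(30) in the one-inclusion case; the only modification is that additivity of (17) across disjoint inclusions (guaranteed by the separation hypothesis (9)) lets the single-inclusion result be iterated, producing $\sum_l$ and the associated polarization tensors $M^l$ from Remark 3.6 through identity (31).

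Combining these pieces yields
\begin{equation*}
\frac{1}{\lambda^i}-\frac{1}{m}\sum_{j=1}^m\frac{1}{\lambda_\epsilon^{ij}}
=\frac{\epsilon^2}{m\lambda^i}\sum_{j=1}^m\sum_{l=1}^N \nabla_x v^{ij}(z_l)\cdot M^l\nabla_x v^{ij}(z_l)
+\theta\!\left(\frac{\epsilon^{5/2}}{\sqrt{\lambda^i}}\right),
\end{equation*}
with $v^{ij}=u^{ij}/\sqrt{\lambda^i}$ and all error constants independent of $i$. To convert to a statement on the $\lambda_\epsilon^{ij}$, I would multiply by $\lambda^i\,\overline{\lambda_\epsilon^i}$ (harmonic average) and use Lemma 2.2, which gives $\lambda^i\overline{\lambda_\epsilon^i}=(\lambda^i)^2+\lambda^i\theta(1)$, to obtain
\begin{equation*}
\overline{\lambda_\epsilon^i}-\lambda^i
=\epsilon^2\bigl(1+\theta(1/\lambda^i)\bigr)\sum_{j=1}^m\sum_{l=1}^N \nabla_x u^{ij}(z_l)\cdot M^l\nabla_x u^{ij}(z_l)
+\theta\bigl(\epsilon^{5/2}\lambda^i\bigr),
\end{equation*}
after using $\nabla v^{ij}\cdot M^l\nabla v^{ij}=\tfrac{1}{\lambda^i}\nabla u^{ij}\cdot M^l\nabla u^{ij}$. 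Finally I would specialize $i=E(1/\epsilon^\alpha)$ and invoke Lemma 3.5 to replace $\lambda^i$ by $\theta(\epsilon^{-\alpha})$, which absorbs the factor $1+\theta(1/\lambda^i)$ into the leading coefficient and turns the $\theta(\epsilon^{5/2}\lambda^i)$ remainder into $\Theta(\epsilon^{5/2-\alpha})$, yielding exactly (37).

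The step I expect to be the most delicate is bookkeeping the error terms uniformly in the eigenvalue index. Since $\lambda^i$ grows with $i$ and we ultimately set $i=E(\epsilon^{-\alpha})$, every $\theta(\cdot)$ picked up along the way must be checked to have a constant independent of $i$; this is precisely where Theorem 3.1 (uniform $L^\infty$ bounds on $u^{ij}$, $\nabla u^{ij}/\sqrt{\lambda^i}$, $\nabla^2 u^{ij}/\lambda^i$) is essential, and where one must be careful with the Taylor remainder and with the passage from the local problem (17) to the polarization tensor. Provided the single-inclusion constants from the derivation preceding Theorem 3.2 are independent of $i$ (which Theorem 3.1 ensures), the iteration over the $N$ well-separated inclusions adds only a finite combinatorial factor and the final error remains $\Theta(\epsilon^{5/2-\alpha})$, completing the proof.
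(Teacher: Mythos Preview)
Your proposal is correct and follows essentially the same route as the paper: you apply Osborn's formula (26), bound the quadratic remainder via Lemma~3.4 (the energy estimate), expand the inner products through (28)--(31) to reach the polarization-tensor identity for $\tfrac{1}{\lambda^i}-\tfrac{1}{m}\sum_j\tfrac{1}{\lambda_\epsilon^{ij}}$, and then convert to $\overline{\lambda_\epsilon^i}-\lambda^i$ by multiplying through by $\lambda^i\overline{\lambda_\epsilon^i}$ and invoking Lemma~2.2 and Lemma~3.3 at $i=E(\epsilon^{-\alpha})$. The paper packages the conversion step as an appeal to the preceding theorem (Theorem~3.6), whereas you unfold that argument directly via Lemma~2.2, but the substance and the tracking of $i$-uniform constants through Theorem~3.1 are identical.
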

\begin{proof}
We have
\begin{eqnarray*}
\frac{1}{\lambda^{i}}-\frac{1}{m}\sum_{j=1}^{m}\frac{1}{\lambda^{ij}_{\epsilon}}&=& \frac{1}{m}\sum_{j=1}^{m}<\frac{1}{\lambda^{i}}u^{ij}-\upsilon_{\epsilon}^{ij},u^{ij}>\\
                                                        \nonumber\\
  &=&\epsilon^{2}\frac{1-k}{\lambda^{{i}^{2}}}
(|B||\nabla_{x}u^{ij}(0)|^{2}\\
                                           \nonumber\\
 & +&\lambda^{i}\int_{\partial B}\frac{\partial \upsilon}{\partial\nu_{\xi}}|_{-}(\xi)\frac{\nabla_{x}u^{ij}}{\lambda^{i}}(0).\xi d\sigma_{\xi})+\theta(\frac{\epsilon^{\frac{5}{2}}}{\sqrt{\lambda^{i}}}),
\end{eqnarray*}
and using the assertion (32), we get
$$\frac{1}{\lambda^{i}}-\frac{1}{m}\sum_{j=1}^{m}\frac{1}{\lambda^{ij}_{\epsilon}}=\frac{\epsilon^{2}}{m \lambda^{i}}\sum_{j=1}^{m}\nabla_{x}\upsilon^{ij}(0).{M^{l}}\nabla_{x}\upsilon^{ij}(0)+\theta(\frac{\epsilon^{\frac{5}{2}}}{\sqrt{\lambda^{i}}}).$$
Then, inserting this in Theorem 3.6 in connection with $v^{ij}=\frac{u^{ij}}{\sqrt{\lambda^{i}}}$, we prove that
$$\overline{\lambda _{\epsilon}^{i}}-\lambda^{i}=
\epsilon^{2}\sum_{j=1}^{m}\sum_{l=1}^{N}\nabla_{x}u^{(\frac{1}{\epsilon^{\alpha}})j}(z_{l}).{M}^{l}\nabla_{x}u^{(\frac{1}{\epsilon^{\alpha}})j}(z_{l})
+\Theta(\epsilon^{(\frac{5}{2}-\alpha)}),$$
where ${M}^{l}$ is the polarisation tensor associated to $B^{l}.$
Finally, we take $i=E(\frac{1}{\epsilon^{\alpha}})$, for $0\leq\alpha\leq1$, we get our main result.
\end{proof}

\end{document}